\newtheorem{theorem}{Theorem}[section]
\newtheorem{corollary}[theorem]{Corollary}
\newtheorem{lemma}[theorem]{Lemma}
\newtheorem{conjecture}[theorem]{Conjecture}
\newtheorem{remark}[theorem]{Remark}
\newcommand{\la}{\lambda}
\newcommand{\al}{\alpha}
\newcommand{\scr}{\mathscr} 
\newcommand\mult{\operatorname{mult}}
      \def\@setcopyright{}
      \def\serieslogo@{}
\begin{document}

   \author{M. A.  Bahmanian}\thanks{Research is partially supported by NFIG at ISU, and NSA Grant H98230-16-1-0304}
   \address{Department of Mathematics,
  Illinois State University, Normal, IL USA 61790-4520}

   \title[Factorizations of Complete Multipartite Hypergraphs]{Factorizations of Complete Multipartite Hypergraphs}

   \begin{abstract}
In a mathematics workshop with $mn$ mathematicians from $n$ different areas, each area consisting of $m$ mathematicians, we want to create a collaboration network. For this purpose, we would like to schedule daily meetings between groups of size three, so that (i) two people of the same area meet one person of another area, (ii) each person has exactly $r$ meeting(s) each day, and (iii) each pair of people of the same area have exactly $\lambda$ meeting(s) with each person of another area by the end of the workshop. Using hypergraph amalgamation-detachment, we prove a more general theorem. In particular we show that above meetings can be scheduled if: $3\divides rm$, $2\divides rnm$ and  $r\divides 3\lambda(n-1)\binom{m}{2}$. This result can be viewed as an analogue of Baranyai's theorem on factorizations of complete multipartite hypergraphs.
   \end{abstract}

   \subjclass[2000]{ 05C70, 05C51, 05C15, 05C65, 05B30}

   \keywords{Baranyai's Theorem, Amalgamations, Detachments, Multipartite Hypergraphs, Factorizations, Decompositions}
   \date{\today}
   \maketitle

\section{Introduction}
Throughout this paper, $\mathbb{N}$ is the set of positive integers,   $m,n,r,\lambda\in \mathbb{N}$, and $[n]:=\{1,\dots,n\}$. In a mathematics workshop with $mn$ mathematicians from $n$ different areas, each area consisting of $m$ mathematicians, we want to create a collaboration network. For this purpose, we would like to schedule daily meetings between groups of size three, so that (i) two people of the same area meet one person of another area, (ii) each person has exactly $r$ meeting(s) each day, and (iii) each pair of people of the same area have exactly $\lambda$ meeting(s) with each person of another area by the end of the workshop. Using hypergraph amalgamation-detachment, we prove a more general theorem. In particular we show that above meetings can be scheduled if: $3\divides rm$, $2\divides rnm$ and  $r\divides 3\lambda(n-1)\binom{m}{2}$. 

A {\it hypergraph} $\mathcal G$ is a pair $(V,E)$ where $V$ is a finite set called the vertex set, $E$ is the edge multiset, where every edge is itself a multi-subset of $V$. This means that not only can an edge  occur multiple times in $E$, but also each vertex can have multiple occurrences within an edge.  
The total number of occurrences of a  vertex $v$ among all edges of $E$ is called the {\it degree}, $d_{\mathcal G}(v)$ of $v$ in $\mathcal G$. For $h\in \mathbb{N}$, $\mathcal G$ is said to be $h$-{\it uniform} if $|e|=h$ for each $e\in E$.   
For  $r, r_1,\dots,r_k\in \mathbb{N}$, an $r$-factor in a hypergraph $\mathcal G$ is a spanning  $r$-regular sub-hypergraph, and an {\it $(r_1,\dots,r_k)$-factorization}  is a partition of the edge set of $\mathcal G$ into $F_1,\dots, F_k$ where $F_i$ is an $r_i$-factor for $i\in [k]$. We abbreviate $(r,\dots,r)$-factorization to $r$-factorization.  

The hypergraph $K_n^h:=(V,\binom{V}{h})$ with $|V|=n$ (by $\binom{V}{h}$ we mean the collection of all $h$-subsets of $V$) is called a  {\it complete} $h$-uniform hypergraph. In connection with Kirkman's schoolgirl problem  \cite{Kirk1847}, Sylvester conjectured that $K_n^h$ is 1-factorable if and only if $h\divides n$. This conjecture was finally settled by Baranyai \cite{Baran75}.   Let $\scr K_{n\times m}^3$ denote the 3-uniform  hypergraph with vertex partition $\{V_i:i\in [n]\}$, so that $V_i=\{x_{ij}:  j\in [m]\}$ for $i \in [n]$, and with edge set $E=\{\{x_{ij}, x_{ij'}, x_{kl}\} : i,k\in [n], j,j',l\in [m], j\neq j',  i\neq k\}$.
One may notice that finding an $r$-factorization  for $ \scr K_{n\times m}^3$  is equivalent to scheduling the meetings between mathematicians with the above restrictions for the case $\lambda=1$. 

If we replace every edge $e$ of $\mathcal G$ by $\lambda$ copies of $e$, then we denote the new hypergraph by $\lambda \mathcal G$.   In this paper, the main result is the  following theorem which is obtained by proving a more general result (see Theorem \ref{moregenres}) using amalgamation-detachment techniques.
 \begin{theorem}\label{type2suff}
$\lambda \scr K_{m\times n}^{3}$ is $(r_1,\ldots,r_k)$-factorable if 
\begin{enumerate}
\item [(S1)] $3\divides r_im$ for $i\in [k]$, 
\item [(S2)] $2\divides r_imn$ for $i\in [k]$,  and 
\item [(S3)] $\sum_{i=1}^k r_i= 3\lambda(n-1)\binom{m}{2}$. 
\end{enumerate}
\end{theorem}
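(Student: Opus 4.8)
The plan is to use the amalgamation--detachment method. First I would amalgamate each part $V_i$ of $\scr K_{n\times m}^3$ to a single vertex $\al_i$, obtaining a hypergraph $\cc A$ on the $n$ vertices $\al_1,\dots,\al_n$ in which the edge $\{x_{ij},x_{ij'},x_{tl}\}$ collapses to the multi-edge $\{\al_i,\al_i,\al_t\}$ (the two part-$i$ vertices merge, so $\al_i$ occurs there with multiplicity two). A direct count shows that for each ordered pair $(i,t)$ with $i\neq t$ there are exactly $\la m\binom m2$ copies of $\{\al_i,\al_i,\al_t\}$, whence $d_{\cc A}(\al_i)=3\la m(n-1)\binom m2=m\sum_{c=1}^{k}r_c$ by (S3). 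Under this amalgamation an $r_c$-factor $F_c$ of $\scr K_{n\times m}^3$ becomes an $(mr_c)$-regular sub-hypergraph of $\cc A$, so a genuine $(r_1,\dots,r_k)$-factorization of $\scr K_{n\times m}^3$ projects onto an $(mr_1,\dots,mr_k)$-factorization of the small hypergraph $\cc A$. The strategy is to run this correspondence backwards: colour $\cc A$ first, then detach the $\al_i$ back into their $m$ vertices.

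The second step is to exhibit an $(mr_1,\dots,mr_k)$-factorization of $\cc A$, which is a finite integer-distribution problem. Writing $x_{it}^{(c)}$ for the number of copies of $\{\al_i,\al_i,\al_t\}$ placed in colour $c$, I need $\sum_{c}x_{it}^{(c)}=\la m\binom m2$ for every $i\neq t$ together with the degree equations $\sum_{t\neq i}\bigl(2x_{it}^{(c)}+x_{ti}^{(c)}\bigr)=mr_c$ for every $i$ and $c$. Condition (S3) makes the total degrees balance, and (S1)--(S2) are the integrality and parity conditions under which this balanced distribution can be realised in nonnegative integers (equivalently, they let the ``as equal as possible'' allotment of each edge-type over the colours be attained with no rounding); I expect this to reduce to a routine flow/parity check. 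With $\cc A$ coloured, I would invoke the hypergraph detachment result (Theorem~\ref{moregenres}) with amalgamation number $m$ at every $\al_i$, detaching all $\al_i$ \emph{simultaneously} into $V_i=\{x_{i1},\dots,x_{im}\}$ so that, inside each colour class, the degree $mr_c$ at $\al_i$ is shared as evenly as possible among the $m$ detached vertices and the multiplicity of each resulting triple is shared as evenly as possible. Since $m\mid mr_c$ the degree splits into exactly $r_c$ at every vertex, giving $r_c$-regularity of each factor, and since the $\la m\binom m2$ copies of $\{\al_i,\al_i,\al_t\}$ spread over the $m\binom m2$ admissible triples, each triple attains multiplicity exactly $\la$, recovering $\la\scr K_{n\times m}^3$ together with its factorization.

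The main obstacle is that the detachment must accomplish two delicate things at once. First, each amalgamated edge $\{\al_i,\al_i,\al_t\}$ carries $\al_i$ with multiplicity two, whereas a legitimate triple of $\scr K_{n\times m}^3$ requires the two occurrences to land on \emph{distinct} vertices $x_{ij},x_{ij'}$ with $j\neq j'$; so the fair detachment has to be \emph{simple} in that doubled slot, which is possible only because $\binom m2\cdot m$ distinct triples are available to absorb the $\la m\binom m2$ copies evenly. Second, the fairness has to hold simultaneously for vertex-degrees \emph{in every colour} and for edge-multiplicities, so that no factor loses its regularity while the global edge multiset is corrected to exactly $\la$ copies of every triple. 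Reconciling these two fairness demands within a single detachment is exactly what the general result is engineered to deliver, and the remaining work for Theorem~\ref{type2suff} is the bookkeeping of the preceding paragraph: verifying that (S1), (S2) and (S3) are precisely the divisibility and counting hypotheses that let every ``as equal as possible'' conclusion be read as ``exactly equal.''
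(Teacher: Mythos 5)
Your overall strategy is the paper's own: amalgamate each part to a single vertex, obtaining $\lambda m\binom{m}{2}K_n^{*}$ (your $\cc A$, with your edge and degree counts correct), color this small hypergraph, then detach fairly. But the step that carries essentially all of the mathematical weight --- the detachment --- is never proved, and your appeal to Theorem \ref{moregenres} does not work the way you use it. As stated, Theorem \ref{moregenres} is a free-standing existence theorem (``for each $\ell$ there exists \emph{some} colored $n$-partite hypergraph satisfying (C1)--(C4)''), not a lemma that can be ``invoked with amalgamation number $m$ at every $\al_i$'' on a separately constructed, pre-colored $\cc A$. If you are permitted to cite it as a black box, then your amalgamation and coloring steps are redundant: the paper's entire proof of Theorem \ref{type2suff} is to take $\ell=mn$, observe that (C1) forces $g\equiv 1$, so (C2)--(C3) identify the hypergraph as $\lambda\scr K_{m\times n}^{3}$ and (C4) is the factorization. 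If instead you intend to prove the detachment claim (which is the actual content), your proposal supplies no mechanism: ``shared as evenly as possible, simultaneously for degrees in every colour and for multiplicities'' is precisely what must be established, since independent evenness demands generally conflict. The paper's mechanism is an induction that splits off \emph{one} vertex at a time, applying Nash-Williams' laminar-family lemma (Lemma \ref{laminarlem}) to the two laminar families of hinge sets $\scr A=\{H_1(\al),\dots,H_k(\al)\}\cup\{H^e(\al):e\in E\}$ and $\scr B=\{H(\al^p,U)\}$ to extract a single hinge set $Z$ that is fair for every color class and every edge type at once, followed by the verification that (C1)--(C4) survive the split; your proposed \emph{simultaneous} detachment of all $\al_i$ would in addition require joint fairness across interacting splits in different parts, a difficulty the one-vertex-at-a-time induction is specifically designed to avoid.

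There is a second, smaller gap: the coloring of $\cc A$. Your degree system $\sum_{t\neq i}\bigl(2x_{it}^{(c)}+x_{ti}^{(c)}\bigr)=mr_c$ is the right one, but ``I expect this to reduce to a routine flow/parity check'' is not an argument, and your parenthetical (distributing each edge type ``as equally as possible'' over the colours) is actually false when the $r_i$ are not all equal, since it would force all color degrees at $\al_i$ to be $m\sum_c r_c/k$ rather than $mr_c$. The paper's Lemma \ref{3rfaclemma} closes this by an idea absent from your sketch: seek a \emph{symmetric} solution $x_{it}^{(c)}=x_{ti}^{(c)}$, i.e., take an $(mr_1/3,\dots,mr_k/3)$-factorization of the multigraph $\lambda m\binom{m}{2}K_n$ --- which exists by Theorem \ref{bahrodsurvey1thmfac} exactly because (S1) makes $mr_c/3$ an integer, (S1) and (S2) together make $(mr_c/3)\,n$ even, and (S3) gives the required degree sum --- and then replace each graph edge $\{u,v\}$ of color $c$ by the two hypergraph edges $\{u^2,v\}$ and $\{v^2,u\}$, which triples every color degree. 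So while your plan names the right objects and the right divisibility conditions, both pivotal steps remain assertions rather than proofs.
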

In particular, by letting $r=r_1=\dots=r_k$ in Theorem \ref{type2suff}, we solve the Mathematicians Collaboration Problem in the following case.
\begin{corollary} 
$\lambda \scr K_{m\times n}^{3}$ is $r$-factorable if 
\begin{enumerate}
\item [\textup {(i)}] $3\divides rm$,
\item [\textup {(ii)}] $2\divides rnm$, and 
\item [\textup {(iii)}]  $r\divides 3\lambda(n-1)\binom{m}{2}$. 
\end{enumerate}
\end{corollary}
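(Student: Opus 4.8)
The plan is to obtain this as a direct specialization of Theorem \ref{type2suff}, since an $r$-factorization is precisely an $(r_1,\dots,r_k)$-factorization in which every $r_i$ equals $r$. The only quantity to pin down is the number $k$ of factors. Because each $r_i$-factor is $r_i$-regular, an $(r_1,\dots,r_k)$-factorization forces $\sum_{i}r_i$ to equal the common degree of $\lambda\scr K^3_{m\times n}$, which a short count shows to be $3\lambda(n-1)\binom{m}{2}$. With all $r_i=r$ this reads $kr=3\lambda(n-1)\binom{m}{2}$, so I would set
\[
k:=\frac{3\lambda(n-1)\binom{m}{2}}{r}.
\]
Hypothesis (iii), namely $r\divides 3\lambda(n-1)\binom{m}{2}$, is exactly the statement that this $k$ is a positive integer. (If $m=1$ or $n=1$ the hypergraph has no edges and the claim is vacuous, so I may assume $m,n\ge 2$, whence $k\ge 1$.)

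With $r_1=\dots=r_k=r$ and this value of $k$, the three hypotheses of Theorem \ref{type2suff} reduce verbatim to those of the corollary: (S1) becomes $3\divides rm$, which is (i); (S2) becomes $2\divides rmn$, which is (ii); and (S3) becomes $kr=3\lambda(n-1)\binom{m}{2}$, which holds by the definition of $k$. Applying Theorem \ref{type2suff} then yields an $(r,\dots,r)$-factorization, that is, an $r$-factorization, of $\lambda\scr K^3_{m\times n}$. I expect no genuine obstacle: all of the combinatorial content lives in Theorem \ref{type2suff}, and the corollary is a bookkeeping step whose only subtlety is the passage from the \emph{equality} (S3) to the \emph{divisibility} (iii), which is exactly what guarantees that the number of factors is an integer.
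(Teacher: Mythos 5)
Your proposal is correct and matches the paper's own treatment exactly: the paper derives the corollary by setting $r=r_1=\dots=r_k$ in Theorem \ref{type2suff}, with hypothesis (iii) serving precisely to make the number of factors $k=3\lambda(n-1)\binom{m}{2}/r$ an integer. Your write-up merely spells out this bookkeeping (including the degenerate cases $m=1$ or $n=1$) more explicitly than the paper does.
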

 The two results above can be seen as analogues of Baranyai's theorem for complete 3-uniform ``multipartite"  hypergraphs. We  note that  in fact, Baranyai \cite{Baran79} solved the problem of factorization of complete uniform multipartite hypergraphs, but here we aim to solve this problem under a different notion of ``multipartite". In Baranyai's definition, an edge can have at most one vertex from each part, but here we allow an edge to have two vertices from each part (see the definition of $\scr K_{m\times n}^{3}$ above). More  precise definitions together with preliminaries  are given in Section \ref{term},  the main result is proved in Section \ref{factorizationcor}, and related open problems are discussed in the last section. 

Amalgamation-detachment technique  was first introduced by Hilton \cite{H2} (who found a new proof for decompositions of complete graphs into Hamiltonian cycles), and was more developed by Hilton and Rodger \cite{HR}. 
Hilton's method was later genealized to arbitrary graphs \cite{bahrodjgtold},  and later to hypergraphs \cite{bahCPC1,Bahhyp1,bahrodjgt13, bahnew16} leading to various extensions of Baranyai's theorem (see for example \cite{bahCPC1, bahCCA1}).  The results of the present paper, mainly relies on those from \cite{bahCPC1} and \cite{Nash87}. For the sake of completeness, here we give a self contained exposition. 
\section{More Terminology and Preliminaries} \label{term} 
Recall that an edge can have multiple copies of the same vertex. For the purpose of this paper, all hypergraphs (except when we use the term graph) are 3-uniform, so an edge is always of one of the  forms  $\{u,u,u\}, \{u,u,v\}$, and $\{u,v,w\}$ which we will abbreviate to   $\{u^3\}, \{u^2,v\}$, and  $\{u,v,w\}$, respectively. In a hypergraph $\mathcal G$,  $\mult_{\mathcal G}(.)$ denotes the multiplicity;  for example $\mult_{\mathcal G}(u^3)$ is the multiplicity of an edge of the form $\{u^3\}$. Similarly, for a graph $G$, $\mult(u,v)$ is the multiplicity of the edge $\{u,v\}$. A \textit{k-edge-coloring} of a hypergraph $\mathcal G$  is a mapping $K:E(\mathcal G)\rightarrow [k]$, and the sub-hypergraph of $\mathcal G$ induced by  color  $i$ is denoted by $\mathcal G(i)$. Whenever it is not ambiguous, we drop the subscripts, and also we abbreviate $d_{\mathcal{G}(i)}(u)$ to $d_i(u)$, $\mult_{\mathcal G(i)}(u^3)$ to $\mult_{i}(u^3)$, etc.. 

Factorizations of the complete graph, $K_n$, is studied in a very general form in \cite{MJohnson07, Johnstone00}, however for the purpose of this paper, a $\lambda$-fold version  is needed:  
\begin{theorem}\textup{(Bahmanian, Rodger \cite[Theorem 2.3]{bahrodsurvey1})}\label{bahrodsurvey1thmfac}
$\lambda K_n$ is $(r_1,\dots,r_k)$-factorable if and only if $r_i n$ is even for $i\in[ k]$ and $\sum_{i=1}^k r_i=\lambda (n-1)$.
\end{theorem}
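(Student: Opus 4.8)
The plan is to treat the two directions separately, with necessity a short counting argument and sufficiency reducing to classical decompositions of $K_n$. For necessity, suppose $\lambda K_n = F_1\cup\cdots\cup F_k$ where each $F_i$ is an $r_i$-factor. Since $F_i$ is spanning and $r_i$-regular on $n$ vertices, the handshake identity gives $|E(F_i)|=r_in/2$, so $r_in$ must be even for every $i\in[k]$. Summing degrees at any fixed vertex $v$ over the factors yields $\sum_{i=1}^k r_i=\sum_{i=1}^k d_{F_i}(v)=d_{\lambda K_n}(v)=\lambda(n-1)$. This settles the ``only if'' direction with no real difficulty.

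For sufficiency I would build each $r_i$-factor as an edge-disjoint union of standard factors of $K_n$, splitting on the parity of $n$; the point is that the hypothesis ``$r_in$ even for all $i$'' is automatic when $n$ is even and forces every $r_i$ to be even when $n$ is odd. If $n$ is even, I would invoke the classical $1$-factorization of $K_n$ into $n-1$ perfect matchings, so that $\lambda K_n$ decomposes into $\lambda(n-1)$ edge-disjoint $1$-factors. Since $\sum_{i=1}^k r_i=\lambda(n-1)$, these can be partitioned into $k$ blocks of sizes $r_1,\dots,r_k$, and the union of the matchings in block $i$ is exactly an $r_i$-factor. If $n$ is odd, write $r_i=2s_i$ (legitimate, since each $r_i$ is then even) so that $\sum_{i=1}^k s_i=\lambda(n-1)/2$; here I would use the classical decomposition of $K_n$ into $(n-1)/2$ Hamiltonian cycles, giving a decomposition of $\lambda K_n$ into $\lambda(n-1)/2$ edge-disjoint $2$-factors, which I then group into blocks of sizes $s_1,\dots,s_k$ to obtain the required $r_i$-factors.

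The hard part is essentially hidden in the two base decompositions---the existence of a $1$-factorization of $K_n$ for even $n$ and of a Hamiltonian decomposition for odd $n$---which I would cite rather than reprove; once these are available the grouping is routine, and the divisibility hypotheses are precisely what make the block sizes well-defined. Alternatively, and more in keeping with the methods of this paper, one can establish sufficiency by amalgamation--detachment: $k$-edge-color the $\lambda\binom{n}{2}$ loops at a single amalgamated vertex $\alpha$ so that color $i$ receives $r_in/2$ of them (possible exactly because $r_in$ is even and $\sum_{i=1}^k r_in/2=\lambda\binom{n}{2}$), and then apply a detachment theorem such as Nash-Williams \cite{Nash87} to split $\alpha$ into $n$ vertices with balanced color-degrees and balanced multiplicities. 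The hypotheses force every relevant average---namely $r_i$ for the color-$i$ degree and $\lambda$ for the multiplicity between each pair---to be an integer, so a balanced detachment reproduces $\lambda K_n$ with each color class an $r_i$-factor. In this route the genuinely delicate point is excluding residual loops at the detached vertices, which requires the multiplicity-balance, and not merely the degree-balance, guaranteed by the detachment theorem.
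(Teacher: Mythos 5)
Your proof is correct, but note that the paper itself contains no proof of this statement to compare against: it is quoted as Theorem 2.3 of the Bahmanian--Rodger survey \cite{bahrodsurvey1}, and is used here as a black box (its only role is to feed Lemma \ref{3rfaclemma}). Your necessity argument (edge count in each factor for the parity condition, degree count at a vertex for the sum condition) is the standard one and is fine. Your sufficiency argument is a genuinely elementary route: for $n$ even you stack $\lambda$ copies of a $1$-factorization of $K_n$ and group the resulting $\lambda(n-1)$ perfect matchings into blocks of sizes $r_1,\dots,r_k$; for $n$ odd the hypothesis forces each $r_i=2s_i$ to be even, and you group the $\lambda(n-1)/2$ Hamiltonian cycles coming from Walecki's decomposition into blocks of sizes $s_1,\dots,s_k$. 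Both base decompositions are classical, the block sizes are well defined exactly because $\sum_{i=1}^k r_i=\lambda(n-1)$, and a union of edge-disjoint $1$-factors (respectively $2$-factors) is indeed an $r_i$-factor, so the argument is complete. By contrast, the cited source -- and the spirit of the present paper -- obtains such results by amalgamation--detachment, which is essentially the alternative you sketch: color the $\lambda\binom{n}{2}$ loops at a single amalgamated vertex so that color $i$ receives $r_in/2$ of them, then detach into $n$ vertices with balanced color degrees and balanced pairwise multiplicities via Nash-Williams \cite{Nash87}. The elementary route buys self-containedness, needing only classical decompositions; the amalgamation route buys uniformity and power -- it is the same machinery the paper then scales up to $3$-uniform hypergraphs in Theorem \ref{moregenres}, and it extends to settings (embeddings, prescribed connectivity, irregular degree demands) where explicit classical decompositions are not available.
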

Let $K_{n}^{*}$ denote the 3-uniform hypergraph with $n$ vertices in which $\mult(u^2, v)=1$, and $\mult(u^3)=\mult(u, v, w)=0$ for  distinct vertices $u,v,w$.  A (3-uniform) hypergraph $\mathcal G=(V,E)$ is {\it $n$-partite},  if there exists a partition $\{V_1,\dots,V_n\}$ of $V$ such that for every $e\in E$, $|e\cap V_i|=1, |e\cap V_j|=2$ for some $i,j\in [n]$ with $i\neq j$.  For example, both $K_n^*$ and $\scr K_{m\times n}^{3}$ are $n$-partite. We need another simple but crucial lemma:
\begin{lemma} If $r_in$ is even for $i\in [k]$, and $\sum_{i=1}^k r_i= \lambda (n-1)$, then $\lambda K_{n}^{*}$ is $(3r_1,\ldots,3r_k)$-factorable. \label{3rfaclemma}
\end{lemma}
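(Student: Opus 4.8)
The plan is to reduce the statement to the corresponding fact about the complete graph, namely Theorem~\ref{bahrodsurvey1thmfac}, by exhibiting a degree-tripling correspondence between $\lambda K_n$ and $\lambda K_{n}^{*}$. The key observation is that $K_n^*$ is obtained from $K_n$ by replacing each graph edge by a canonical pair of 3-uniform edges. Concretely, to an unordered pair $\{u,v\}$ of distinct vertices I would associate the two hyperedges $\{u^2,v\}$ and $\{v^2,u\}$. As $\{u,v\}$ ranges over all $\binom{n}{2}$ pairs, these associated hyperedges range over every edge of $K_n^*$ exactly once (each ordered pair $(u,v)$ with $u\neq v$ being hit precisely once), so this really is a bijection, compatible with taking $\lambda$ copies throughout.

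First I would record the degree bookkeeping that makes the word ``tripling'' accurate. In $K_n$ the edge $\{u,v\}$ contributes $1$ to $d(u)$ and $1$ to $d(v)$; under the correspondence above the pair $\{u^2,v\},\{v^2,u\}$ contributes $2+1=3$ to $d(u)$ and $1+2=3$ to $d(v)$, and $0$ to every other vertex. Hence, if $G$ is any spanning $r$-regular subgraph of $\lambda K_n$, then replacing each of its edges by the associated pair of hyperedges produces a spanning sub-hypergraph $G^{*}$ of $\lambda K_n^*$ in which every vertex has degree exactly $3r$; that is, an $r$-factor of $\lambda K_n$ maps to a $3r$-factor of $\lambda K_n^*$.

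With this in hand the proof is immediate. The hypotheses that $r_i n$ is even for $i\in[k]$ and $\sum_{i=1}^{k} r_i=\lambda(n-1)$ are precisely the conditions of Theorem~\ref{bahrodsurvey1thmfac}, so $\lambda K_n$ admits an $(r_1,\dots,r_k)$-factorization $F_1,\dots,F_k$. Applying the correspondence to each $F_i$ gives sub-hypergraphs $F_1^*,\dots,F_k^*$ of $\lambda K_n^*$, each $F_i^*$ being a $3r_i$-factor by the degree computation above. Since the $F_i$ partition $E(\lambda K_n)$ and the correspondence sends this partition, edge by edge, onto a partition of $E(\lambda K_n^*)$, the $F_i^*$ form the desired $(3r_1,\dots,3r_k)$-factorization.

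There is no deep obstacle here; the lemma is essentially a translation of the graph result across the correspondence. The only point demanding care is the bookkeeping: one must check both that the doubled vertex contributes $2$ (not $1$) to the degree, so that the multiplier is $3$ rather than $2$, and that the assignment $\{u,v\}\mapsto\{\{u^2,v\},\{v^2,u\}\}$ is a genuine bijection onto $E(\lambda K_n^*)$ respecting multiplicities, so that the images $F_i^*$ genuinely partition the edge set rather than merely cover it.
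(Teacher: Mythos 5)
Your proposal is correct and takes essentially the same route as the paper: the paper also invokes Theorem \ref{bahrodsurvey1thmfac} to get an $(r_1,\dots,r_k)$-factorization of $\lambda K_n$ viewed as a $k$-edge-coloring, and then replaces each color-$i$ graph edge $\{u,v\}$ by the two hyperedges $\{u^2,v\}$ and $\{v^2,u\}$ of the same color (stated there as $\mult_{\mathcal H(i)}(u^2,v)=\mult_{G(i)}(u,v)$ for all distinct $u,v$), which triples every degree exactly as in your bookkeeping. The only difference is presentational: you phrase the construction as an explicit edge bijection, the paper as a multiplicity assignment.
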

\begin{proof}
Let $G=\lambda K_n$ with vertex set $V$. By Theorem \ref{bahrodsurvey1thmfac}, $G$ is $(r_1,\ldots,r_k)$-factorable. Using this factorization, we  obtain a $k$-edge-coloring for $G$ such that $d_{G(i)}(v)=r_i$ for every $v\in V$ and every color $i\in [k]$. Now we form a $k$-edge-colored hypergraph $\mathcal H$ with vertex set $V$ such that $\mult_{\mathcal H(i)}(u^2,v)=\mult_{G(i)}(u,v)$ for every pair of distinct vertices $u,v\in V$, and  each color $i\in [k]$. It is easy to see that $\mathcal H\cong\lambda K_{n}^{*}$ and $d_{\mathcal H(i)}(v)=3r_i$ for every $v\in V$ and every color $i\in [k]$. Thus we obtain a $(3r_1,\ldots,3r_k)$-factorization for $\lambda K_{n}^{*}$. 
\end{proof}

If the multiplicity of a vertex $\alpha$ in an edge $e$ is $p$,  we say that $\alpha$ is {\it incident} with $p$ distinct  {\it hinges}, say $h_1(\alpha,e),\dots,h_p(\alpha,e)$, and we also say that $e$ is {\it incident} with  $h_1(\alpha,e),\dots,h_p(\alpha,e)$. The set of all hinges  in $\mathcal G$ incident with $\alpha$ is denoted by $H_{\mathcal G}(\alpha)$; so $|H_{\mathcal G}(\alpha)|$ is in fact  the degree of $\alpha$. 

Intuitively speaking, an {\it $\alpha$-detachment} of a hypergraph $\mathcal G$ is a hypergraph obtained by splitting a vertex $\alpha$ into one or more vertices and sharing the incident hinges and edges  among the subvertices. That is, in an $\alpha$-detachment $\mathcal G'$ of $\mathcal G$ in which we split $\alpha$ into $\alpha$ and $\beta$,  an edge of the form $\{\alpha^p,u_1,\dots,u_z\}$ in $\mathcal G$ will be of the form $\{\alpha^{p-i},\beta^{i},u_1,\dots,u_z\}$ in $\mathcal G'$ for some $i$, $0\leq i\leq p$. Note that a hypergraph and its detachments have the same hinges. Whenever it is not ambiguous, we use $d'$, $\mult'$, etc. for degree, multiplicity and other hypergraph parameters in $\mathcal G'$.  

Let us fix a vertex $\alpha$ of a $k$-edge-colored hypergraph $\mathcal G=(V,E)$. For $i\in [k]$, let $H_i(\alpha)$ be the set of hinges each of which is incident with both $\alpha$ and an edge of color $i$ (so $d_i(\alpha)=|H_i(\alpha)|$). For any edge $e\in E$, let $H^e(\alpha)$ be the collection of hinges incident with both $\alpha$ and  $e$. Clearly, if $e$ is of color $i$, then $H^e(\alpha)\subset H_i(\alpha)$.

A family $\scr A$ of sets is \textit{laminar} if, for every pair $A, B$ of sets belonging to $\scr A$, either $A\subset B$, or $B\subset A$, or $A\cap B=\varnothing$. We shall present two lemmas, both of which follow immediately from definitions. 
\begin{lemma}\label{lamAlem}
Let $\scr A  =    \{H_{1}(\alpha),\ldots,H_{k}(\alpha)\}     \cup  \{ H^e(\alpha) : e  \in E\}$. Then $\scr A$ is a laminar family of subsets of $H(\alpha)$.
\end{lemma}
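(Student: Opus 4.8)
The plan is to verify the laminarity condition directly by a short case analysis over the three kinds of unordered pairs one can draw from $\scr A$: two sets of the form $H_i(\alpha)$, two sets of the form $H^e(\alpha)$, and one set of each kind. Everything will follow from a single structural observation that I would establish first, namely that each hinge in $H(\alpha)$ is incident with exactly one edge of $\mathcal G$, and that this edge carries exactly one color under the given $k$-edge-coloring $K$. The first assertion is immediate from the definition of a hinge $h_j(\alpha,e)$, which is bound to a specific edge-occurrence $e$; the second is just that $K$ assigns a single color to each edge.

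From this observation I would extract three facts. Since every hinge incident with $\alpha$ lies in a unique edge $e$, and that edge has a unique color $i=K(e)$, each such hinge belongs to exactly one $H_i(\alpha)$; hence $\{H_1(\alpha),\dots,H_k(\alpha)\}$ is a partition of $H(\alpha)$ (with some parts possibly empty), and in particular these sets are pairwise disjoint. The same uniqueness of the incident edge gives $H^e(\alpha)\cap H^{e'}(\alpha)=\varnothing$ whenever $e\neq e'$. Finally, as already noted in the text, if $e$ has color $i$ then $H^e(\alpha)\subseteq H_i(\alpha)$; and if $e$ has color $i'\neq i$ then $H^e(\alpha)\subseteq H_{i'}(\alpha)$, which is disjoint from $H_i(\alpha)$.

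With these in hand the three cases are immediate. A pair $H_i(\alpha),H_{i'}(\alpha)$ is either identical (when $i=i'$) or disjoint (when $i\neq i'$). A pair $H^e(\alpha),H^{e'}(\alpha)$ is either identical (when $e=e'$) or disjoint (when $e\neq e'$). For a mixed pair $H_i(\alpha),H^e(\alpha)$, writing $i'=K(e)$: if $i=i'$ then $H^e(\alpha)\subseteq H_i(\alpha)$, while if $i\neq i'$ the two sets are disjoint. In every case one of the three laminar alternatives holds, so $\scr A$ is laminar, and each of its members is visibly a subset of $H(\alpha)$. There is no genuine difficulty here; the only point requiring care is the bookkeeping of the definitions, specifically that hinges are attached to individual edge-occurrences rather than to edge-types, so that distinct edges, even if equal as multisets, contribute disjoint sets of hinges.
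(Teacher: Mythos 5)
Your proof is correct, and it is exactly the argument the paper has in mind: the paper offers no explicit proof, stating only that the lemma ``follows immediately from definitions,'' and your case analysis (hinges attach to a unique edge-occurrence, each edge has a unique color, hence the $H_i(\alpha)$ partition $H(\alpha)$, distinct edges give disjoint $H^e(\alpha)$, and $H^e(\alpha)\subseteq H_{K(e)}(\alpha)$) is precisely that immediate verification written out. Your closing remark that hinges belong to edge-occurrences rather than edge-types is the right point of care in the multiset setting.
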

For each $p\in \{1,2\}$, and each $U \subset V\backslash\{\alpha\}$, let $H(\alpha^p, U)$ be the set of hinges each of which is incident with both $\alpha$ and an edge of the form $\{\alpha^p\}\cup U$ in $\mathcal G$ (so $|H(\alpha^p, U)|=p\mult(\{\alpha^p,U\}$).
\begin{lemma}\label{lamBlem}
Let $\scr B=\{H(\alpha^p, U): p\in \{1,2\}, U \subset V\backslash\{\alpha\}\}$. Then $\scr B$ is a laminar family of disjoint subsets of $H(\alpha)$.
\end{lemma}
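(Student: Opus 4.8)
The plan is to verify the two assertions in the lemma separately, namely that each member of $\scr B$ is a subset of $H(\alpha)$ and that the members of $\scr B$ are pairwise disjoint; laminarity will then come for free, since any family of pairwise disjoint sets is trivially laminar (for any two members $A,B$ one has $A\cap B=\varnothing$). The containment $H(\alpha^p,U)\subseteq H(\alpha)$ is immediate from the definitions: every hinge belonging to $H(\alpha^p,U)$ is, by construction, incident with $\alpha$, and $H(\alpha)$ is by definition the set of all hinges in $\mathcal G$ incident with $\alpha$. So the only substantive point is disjointness.

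For disjointness the key observation is that each hinge incident with $\alpha$ is incident with exactly one edge of $\mathcal G$. Indeed, a hinge is a token of the form $h_j(\alpha,e)$ attached to a single (copy of an) edge $e$ containing $\alpha$, so the edge $e$ is uniquely recoverable from the hinge. Since $\mathcal G$ is $3$-uniform, this edge is of the form $\{\alpha^p\}\cup U$ for a uniquely determined multiplicity $p$ of $\alpha$ in $e$ and a uniquely determined multiset $U\subset V\setminus\{\alpha\}$. Consequently, if a single hinge were to lie in both $H(\alpha^p,U)$ and $H(\alpha^{p'},U')$, then we would have $\{\alpha^p\}\cup U=\{\alpha^{p'}\}\cup U'$, forcing $(p,U)=(p',U')$. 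Hence distinct parameter pairs $(p,U)$ yield disjoint members of $\scr B$, which is exactly what is needed.

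I expect no genuine obstacle here; the proof really is forced by the definitions. The only point that deserves care is the bookkeeping convention that a hinge records both its vertex and the particular edge-copy from which it arises, so that no hinge can simultaneously witness two different edge-types $\{\alpha^p\}\cup U$ and $\{\alpha^{p'}\}\cup U'$. I would also remark in passing that hinges arising from an edge of the form $\{\alpha^3\}$, should one be present, belong to no member of $\scr B$ because $p$ is restricted to $\{1,2\}$; this causes $\scr B$ to miss such hinges but in no way disturbs the disjointness, so the conclusion stands.
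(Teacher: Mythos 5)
Your proof is correct and coincides with the paper's treatment: the paper offers no written argument, stating only that this lemma ``follows immediately from definitions,'' and your write-up is precisely that immediate verification (each hinge determines a unique edge-copy, hence a unique pair $(p,U)$, so the sets $H(\alpha^p,U)$ are pairwise disjoint subsets of $H(\alpha)$, and disjointness trivially gives laminarity). The side remark about hinges from a hypothetical edge $\{\alpha^3\}$ being covered by no member of $\scr B$ is accurate and harmless, since the lemma claims disjointness, not that $\scr B$ partitions $H(\alpha)$.
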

If $x, y$ are real numbers, then $\lfloor x \rfloor$ and $\lceil x \rceil$ denote the integers such that $x-1<\lfloor x \rfloor \leq x \leq \lceil x \rceil < x+1$, and $x\approx y$ means $\lfloor y \rfloor \leq x\leq \lceil y \rceil$. We need the following powerful lemma:
\begin{lemma}\textup{(Nash-Williams \cite[Lemma 2]{Nash87})}\label{laminarlem}
If $\scr A, \scr B$ are two laminar families of subsets of a finite set $S$, and $n\in \mathbb{N}$, then there exist a subset $A$ of $S$ such that 
\begin{eqnarray*} 
 |A\cap P|\approx |P|/n \mbox { for every } P\in \scr A \cup \scr B. 
\end{eqnarray*} 
\end{lemma}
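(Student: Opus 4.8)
The plan is to recast the conclusion as the existence of an integral point of a polytope and to obtain it from total unimodularity. Encode a candidate $A$ by its indicator vector $x\in\{0,1\}^{S}$; then the requirement $|A\cap P|\approx |P|/n$ for every $P\in\scr A\cup\scr B$ becomes the linear condition $\lfloor |P|/n\rfloor\le \sum_{s\in P}x_s\le \lceil |P|/n\rceil$. Accordingly I would study the polytope
\[
Q=\Big\{x\in\mathbb R^{S}\ :\ 0\le x_s\le 1\ (s\in S),\ \ \lfloor |P|/n\rfloor\le \textstyle\sum_{s\in P}x_s\le \lceil |P|/n\rceil\ (P\in\scr A\cup\scr B)\Big\}
\]
and try to show it contains a $\{0,1\}$-vector.

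First I would note that $Q$ is nonempty by exhibiting the fractional point $x_s=1/n$ for all $s$: since $n\ge 1$ we have $0\le 1/n\le 1$, and $\sum_{s\in P}1/n=|P|/n$ lies between $\lfloor |P|/n\rfloor$ and $\lceil |P|/n\rceil$, so this point obeys every constraint. It then remains to produce an \emph{integral} point of $Q$. For this I would prove that the coefficient matrix $M$ of the set-sum constraints, namely the incidence matrix of $\scr A\cup\scr B$ (rows indexed by the sets, columns by $S$), is totally unimodular. The box constraints only append an identity block, which preserves total unimodularity, and all the bounds $\lfloor |P|/n\rfloor,\lceil |P|/n\rceil,0,1$ are integers; hence by the Hoffman--Kruskal theorem every vertex of $Q$ is integral. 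Being nonempty and bounded, $Q$ has a vertex $x^{*}\in\{0,1\}^{S}$, and then $A:=\{s\in S:x^{*}_s=1\}$ satisfies $|A\cap P|=\sum_{s\in P}x^{*}_s\approx |P|/n$ for every $P$, as required. (For a single laminar family, $M$ is a network matrix, so this reduction is routine; the difficulty is the presence of two families.)

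The heart of the argument, and the step I expect to be the main obstacle, is the total unimodularity of $M$ for two laminar families. I would establish it via the Ghouila--Houri criterion: for an arbitrary set $R$ of rows one must sign each chosen set $P$ by $\varepsilon_P\in\{+1,-1\}$ so that $\sum_{P\in R,\,s\in P}\varepsilon_P\in\{-1,0,1\}$ for every $s\in S$. Write $R_{\scr A}=R\cap\scr A$ and $R_{\scr B}=R\cap\scr B$, and for $P\in R$ let $k_P$ be the number of members of $R$ from the same family that strictly contain $P$; set $\varepsilon_P=(-1)^{k_P}$ if $P\in R_{\scr A}$ and $\varepsilon_P=(-1)^{k_P+1}$ if $P\in R_{\scr B}$. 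The key observation is that, by laminarity, the members of $R_{\scr A}$ containing a fixed $s$ form a chain, and any larger member of that family containing one of them still contains $s$; hence along this chain $k_P$ runs through $0,1,2,\dots$, the signs alternate $+,-,+,\dots$, and the contribution telescopes to a value in $\{0,1\}$. The oppositely biased choice on $R_{\scr B}$ telescopes to a value in $\{-1,0\}$, so adding the two contributions for each $s$ lands in $\{-1,0,1\}$. This asymmetric biasing of the two families is precisely what averts the forbidden column sum $\pm 2$ that a symmetric depth-parity signing would produce, and it verifies the Ghouila--Houri condition, giving the total unimodularity of $M$ and thus the integrality of $Q$.
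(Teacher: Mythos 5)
Your proof is correct, but note that the paper contains no proof of this lemma at all: it is imported verbatim from Nash--Williams \cite[Lemma 2]{Nash87} and used as a black box, so your argument is genuinely independent of anything in the paper. The polyhedral route is sound at every step: the crux is total unimodularity of the incidence matrix of $\scr A\cup\scr B$, and your asymmetric signing does verify Ghouila--Houri. Indeed, for a fixed $s$ the members of $R_{\scr A}$ containing $s$ form a chain $P_1\supsetneq\cdots\supsetneq P_t$ by laminarity, and since any set of $R_{\scr A}$ strictly containing $P_i$ must contain $s$, the sets strictly containing $P_i$ are exactly $P_1,\dots,P_{i-1}$; hence $k_{P_i}=i-1$, the signs alternate $+,-,+,\dots$, and the column contribution is in $\{0,1\}$, while the opposite bias on $R_{\scr B}$ gives $\{-1,0\}$, for a total in $\{-1,0,1\}$. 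Two points you should make explicit: (i) if the same set happens to lie in both families it yields duplicate rows, but duplicate rows never destroy total unimodularity (a square submatrix containing both copies is singular), and within a single family the sets are distinct so the chains are strict, as your counting requires; (ii) $Q$ is a nonempty polytope, hence has a vertex, which is where Hoffman--Kruskal integrality is invoked. Compared with simply citing Nash--Williams, whose original argument is elementary and does not pass through linear programming, your approach trades self-containedness for standard combinatorial-optimization machinery; what it buys is modularity and a slightly stronger statement for free (any integral bounds admitting a common fractional point would do, not just $\lfloor |P|/n\rfloor$ and $\lceil |P|/n\rceil$), at the cost of resting on two nontrivial imported theorems (Ghouila--Houri and Hoffman--Kruskal) rather than one.
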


\section{Proofs}\label{factorizationcor}
Notice that $\lambda\scr K_{m\times n}^{3}$ is a $3\lambda(n-1)\binom{m}{2}$-regular hypergraph with $nm$ vertices and $2\lambda m\binom{n}{2}\binom{m}{2}$ edges. To prove Theorem \ref{type2suff}, we prove the following seemingly stronger result.
\begin{theorem} \label{moregenres}
 Let $3\divides r_im$ and $2\divides r_imn$ for $i\in [k]$, and $\sum_{i=1}^k r_i= 3\lambda(n-1)\binom{m}{2}$. Then for all $\ell=n,n+1,\dots,mn$ there exists a $k$-edge-colored $\ell$-vertex $n$-partite hypergraph $\mathcal G=(V,E)$  and a function $g:V\rightarrow \mathbb N$ such that the following conditions are satisfied:
\begin{enumerate}
\item [(C1)] $\sum_{v\in W} g(v)=m$ for each part $W$ of $\mathcal G$;
\item [(C2)] $\mult (u^2,v)=\lambda \binom{g(u)}{2} g(v)$ for each pair of vertices $u,v$ from different parts of $\mathcal G$;
\item [(C3)] $\mult (u,v,w)=\lambda g(u) g(v)g(w)$ for each pair of distinct vertices $u,w$ from the same part, and $v$ from a different part of $\mathcal G$;
\item [(C4)] $d_{i}(u)=r_i g(u)$ for each color $i\in [k]$ and each $u\in V$.
\end{enumerate}
\end{theorem}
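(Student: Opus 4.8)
The plan is to induct on $\ell$, constructing the hypergraph on $\ell+1$ vertices from the one on $\ell$ vertices by a single vertex detachment, with the fully amalgamated hypergraph serving as the base case $\ell=n$. Note first that summing (C1) over the $n$ parts gives $\sum_{v}g(v)=mn$, so at $\ell=mn$ every vertex has $g\equiv 1$; then (C2)--(C4) read $\mult(u^2,v)=0$, $\mult(u,v,w)=\lambda$, $d_i(u)=r_i$, i.e. $\mathcal G\cong\lambda\scr K_{m\times n}^{3}$ together with an $(r_1,\dots,r_k)$-factorization. Thus the case $\ell=mn$ is exactly Theorem \ref{type2suff}, and it suffices to build the whole chain.

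For the base case $\ell=n$ I would take $\mathcal G$ with a single vertex $w_i$ in each part, so (C1) forces $g(w_i)=m$, (C3) is vacuous, and (C2) requires $\mult(w_i^2,w_j)=\lambda\binom{m}{2}m$ for all $i\neq j$; that is, $\mathcal G\cong\lambda\binom{m}{2}m\,K_n^{*}$. It remains to $k$-color this so that $d_i(w_j)=r_im$, i.e. to exhibit an $(r_1m,\dots,r_km)$-factorization of $\lambda\binom{m}{2}m\,K_n^{*}$, and this comes straight from Lemma \ref{3rfaclemma} applied with $\lambda$ replaced by $\lambda\binom{m}{2}m$ and with $s_i:=r_im/3$ (an integer by (S1)). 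The two hypotheses to verify are that $s_in=r_imn/3$ is even, which holds because $6\divides r_imn$ (combine $3\divides r_im$ with (S2)), and that $\sum_i s_i=\lambda\binom{m}{2}m(n-1)$, which is precisely (S3). The resulting $(3s_1,\dots,3s_k)=(r_1m,\dots,r_km)$-factorization is the desired coloring.

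For the inductive step, assume the statement for some $\ell$ with $n\le\ell<mn$ and let $\mathcal G,g$ be as given. Since $\sum_v g(v)=mn>\ell$, some vertex $\alpha$ has $a:=g(\alpha)\ge 2$; say $\alpha$ lies in part $W$. I would form $\mathcal G'$ on $\ell+1$ vertices by splitting off a new vertex $\beta$ in $W$ with $g'(\beta)=1$, $g'(\alpha)=a-1$, and $g'$ unchanged elsewhere, so (C1) is preserved. To choose the detachment, apply Lemma \ref{laminarlem} with $S=H(\alpha)$, with its parameter $n$ taken to be $a$, and with the laminar families $\scr A$ and $\scr B$ from Lemmas \ref{lamAlem} and \ref{lamBlem}; this produces a set $A\subseteq H(\alpha)$, and the detachment reassigns to $\beta$ exactly those incidences of $\alpha$ lying in $A$ (so an edge $\{\alpha^2,v\}$ with $j$ of its two $\alpha$-hinges in $A$ becomes $\{\alpha^{2-j},\beta^{j},v\}$, and a single-$\alpha$ edge simply relabels $\alpha$ as $\beta$ iff its hinge lies in $A$).

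The key observation making this work is that every target quantity is an integer equal to the \emph{exact} ratio $|P|/a$, so the $\approx$ of Lemma \ref{laminarlem} becomes an equality where it counts. For $P=H_i(\alpha)\in\scr A$ one gets $d_i'(\beta)=|A\cap H_i(\alpha)|=r_ia/a=r_i=r_ig'(\beta)$, giving (C4) at $\beta$; (C4) at $\alpha$ follows by subtraction and all other degrees are untouched, since $\beta$ merely inherits incidences of $\alpha$ without changing colors. For each type-class $P=H(\alpha^p,U)\in\scr B$ the ratio $|P|/a$ is again an integer — for instance $|H(\alpha^2,\{v\})|/a=\lambda(a-1)g(v)$ and $|H(\alpha^1,\{u,v\})|/a=\lambda g(u)g(v)$ — and these are exactly the multiplicities (C2)/(C3) demand for the edges through $\beta$. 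Finally, whenever $a\ge 2$ the per-edge members $H^e(\alpha)\in\scr A$ force $|A\cap H^e(\alpha)|\le\lceil |H^e(\alpha)|/a\rceil\le 1$, so no edge sends both $\alpha$-hinges to $\beta$; this guarantees $\mult'(\beta^2,v)=0=\lambda\binom{g'(\beta)}{2}g(v)$ and, combined with the $\scr B$-counts, pins $\mult'(\alpha,\beta,v)$ and $\mult'(\alpha^2,v)$ to their required values. I expect this last interplay to be the main point needing care: simultaneously hitting the exact $\scr B$-counts while the per-edge sets of $\scr A$ rule out the illegal doubled-$\beta$ edges (and, by the same token, any three-in-one-part edge). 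Once it is checked for each of the finitely many incidence types of $\alpha$, conditions (C1)--(C4) hold for $\mathcal G',g'$, the induction advances, and the terminal case $\ell=mn$ yields the theorem.
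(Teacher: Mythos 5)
Your proposal is correct and follows essentially the same route as the paper: induction on $\ell$ with base case $\lambda m\binom{m}{2}K_n^*$ colored via Lemma \ref{3rfaclemma}, and an inductive step that splits one vertex using Lemma \ref{laminarlem} applied to the laminar families $\scr A$ and $\scr B$, exploiting that each target ratio $|P|/g(\alpha)$ is an integer and that $|Z\cap H^e(\alpha)|\leq 1$ rules out doubled-$\beta$ edges. All the key computations you outline (the exact $\scr B$-counts and the subtraction arguments for $\alpha$) are precisely those in the paper's proof.
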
 
\begin{remark}\textup{ It is implicitly understood that every other type of edge in $\mathcal G$ is of multiplicity 0.
}\end{remark}
Before we prove Theorem \ref{moregenres}, we show how  Theorem \ref{type2suff} is implied by Theorem \ref{moregenres}. 
 \\
 
\noindent {\it \bf Proof of Theorem \ref{type2suff}.}
It is enough to take $\ell=mn$ in Theorem \ref{moregenres}. Then there exists an $n$-partite hypergraph $\mathcal G=(V,E)$ of order $mn$ and  a function $g:V\rightarrow \mathbb N$ such that by (C1)  $\sum_{v\in W} g(v)=m$ for each part $W$ of $\mathcal G$. This implies that $g(v)=1$ for each $v\in V$ and that each part of $\mathcal G$ has $m$ vertices. By (C2),  $\mult_{\mathcal G} (u^2,v)=\lambda \binom{1}{2}(1)=0$ for each pair of vertices $u,v$ from different parts of $\mathcal G$, and by (C3), $\mult_{\mathcal G} (u,v,w)=\lambda$ for each pair of vertices $u,v$ from the same part and $w$ from a different part of $\mathcal G$. This implies that $\mathcal G \cong \lambda \scr K_{m\times n}^{3}$.
Finally, by (C4),  $\mathcal G$ admits a $k$-edge-coloring such that $d_{\mathcal G(i)}(v)=r_i$ for each color $i\in [ k]$. This completes the proof.
\qed

The idea of the proof of Theorem \ref{moregenres} is that each vertex $\alpha$ will be split into $g(\alpha)$ vertices and that this will be done by ``splitting off" single  vertices one at a time. 
\\

\noindent {\it \bf Proof of Theorem \ref{moregenres}.} We prove the theorem by induction on $\ell$.

First we prove the basis of induction, case $\ell=n$. Let $\mathcal G=(V,E)$ be $\lambda m\binom{m}{2}K_{n}^{*}$ and let $g(v)=m$ for all $v\in V$. Since $\mathcal G$ has $n$ vertices, it is $n$-partite (each vertex being a  partite set). Obviously, $\sum_{v\in W} g(v)=g(v)=m$ for each part $W$ of $\mathcal G$. Also, $\mult(u^2,v)=\lambda m\binom{m}{2}=\lambda \binom{g(u)}{2} g(v)$ for each pair of vertices $u,v$ from distinct parts of $\mathcal G$, so (C2) is satisfied. Since there is only  one vertex in each part, (C3) is trivially satisfied. 

Since for $i\in [k]$, $2\divides \frac{r_imn}{3}$ and $\sum_{i=1}^k \frac{r_im}{3}= \lambda m(n-1)\binom{m}{2}$, by Lemma \ref{3rfaclemma}, $\mathcal G$ is $(mr_1,\ldots,mr_k)$-factorable. Thus, we can find a $k$-edge-coloring for $\mathcal G$ such that  $d_{\mathcal G(j)}(v)=mr_i=r_ig(v)$ for $i\in [k]$, and therefore (C4) is satisfied.

Suppose now that for some $\ell \in \{ n, n+1,  \ldots, mn-1 \}$, there exists a $k$-edge-colored $n$-partite hypergraph $\mathcal G=(V,E)$ of order $\ell$ and a function $g:V \rightarrow {\mathbb N}$ satisfying properties (C1)--(C4) from the statement of the theorem. We shall now construct an $n$-partite  hypergraph $\mathcal G'$ of order $\ell+1$ and a function $g':V(\mathcal G') \rightarrow {\mathbb N}$ satisfying  (C1)--(C4). 

Since $\ell<mn$, $\mathcal G$ is $n$-partite and (C1) holds for $\mathcal G$, there exists a vertex $\alpha$ of $\mathcal G$ with $g(\alpha)>1$. The graph $\mathcal G'$ will be constructed as an $\alpha$-detachment of $\mathcal G$ with the help of laminar families  $$\scr A:=    \{H_{1}(\alpha),\ldots,H_{k}(\alpha)\}     \cup  \{ H^e(\alpha) : e  \in E\}$$ and $$
\scr B:= \{H(\alpha^p, U): p\in \{1,2\}, U \subset V\backslash\{\alpha\}\}.$$
By  Lemma \ref{laminarlem}, there exists a subset $Z$ of $H(\al)$ such that 
\begin{equation}\label{lamapp1'} |Z\cap P|\approx |P|/g(\alpha),  \mbox{ for every  } P\in \mathscr A \cup \scr B. \end{equation}

Let $\mathcal G'=(V',E')$ with $V'=V\cup\{\beta\}$ be the hypergraph obtained from $\mathcal G$ by splitting $\alpha$ into two vertices $\alpha$ and  $\beta$ in such a way that hinges which were incident with $\alpha$ in $ \mathcal G$ become incident in $\mathcal G'$ with $\alpha$ or $\beta$ according to whether they do not or do belong to $Z$, respectively. More precisely,
 \begin{equation}\label{hinge1'} 
 H'(\beta)=Z, \quad H'(\alpha)=H( \alpha)\backslash Z. 
 \end{equation}
So $\mathcal G'$ is an $\alpha$-detachment of $\mathcal{G}$ and  the colors of the edges are preserved. Let $g':V' \rightarrow {\mathbb N}$ so that $g'(\al)=g(\al)-1, g'(\beta)=1$, and $g'(u)=g(u)$ for each $u\in V'\backslash \{\al,\beta\}$. 
It is obvious that  $\mathcal G'$ is of order $\ell+1$, $n$-partite, and $\sum_{v\in W} g'(v)=m$ for each part $W$ of $\mathcal G'$ (the new vertex $\beta$ belongs to the same part of $\mathcal{G'}$ as $\alpha$ belongs to).  Moreover, it is clear that  $\mathcal G'$ satisfies (C2)--(C4) if $\{\alpha,\beta\} \cap \{u,v,w\}=\emptyset$. For the rest of the argument, we will repeatedly use the definitions of $\mathscr A, \mathscr B$,  (\ref{lamapp1'}), and (\ref{hinge1'}). 

For $i\in [k]$ we have 
\begin{eqnarray*}
d'_i(\beta) &=&  |Z\cap H_i(\al)|\approx |H_i(\alpha)|/g(\al)
 =  d_i(\alpha)/g(\alpha)=r_i=r_ig'(\beta),\\
d'_i(\alpha) &=&  d_i(\al)-d'_i(\beta)=r_ig(\alpha)-r_i=r_i(g(\al)-1)=r_ig'(\al),
\end{eqnarray*}
so $\mathcal{G'}$ satisfies (C4).   

Let $u\in V'$ so that $u$ and $\alpha$ (or $\beta$) belong to different parts of $\mathcal{G'}$. We have 
\begin{eqnarray*}
\mult'(\beta,u^2)&=&|Z\cap H(\al,\{u^2\})|\approx |H(\al,\{u^2\})|/g(\al)=\mult(\al,u^2)/g(\al)\\
&=&\lambda \binom{g(u)}{2}=\lambda \binom{g'(u)}{2} g'(\beta),\\
\mult' (\al,u^2)&=&\mult(\al,u^2)-\mult' (\beta,u^2)=\lambda \binom{g(u)}{2} g(\al)-\lambda \binom{g(u)}{2}
=\lambda \binom{g'(u)}{2} g'(\al).
\end{eqnarray*}
Recall that $g(\alpha)\geq 2$, and for every $e\in E$ and $i\in[k]$, $|H^e(\alpha)|\leq 2$, and thus $|Z\cap H^e(\alpha)|\approx |H^e(\alpha)|/g(\al)\leq 1$. This implies that 
\begin{eqnarray*}
\mult'(\beta^2,u)=0=\la \binom{g'(\beta)}{2} g'(u),
\end{eqnarray*}
and so $\mult(\al^2,u)=\mult' (\al^2,u)+\mult' (\al,\beta,u)$. Now we have
\begin{eqnarray*}
\mult' (\al,\beta,u)&=&|Z\cap H(\al^2,\{u\})| \approx |H(\al^2,\{u\})|/g(\al)\\
&=&2\mult(\al^2,u)/g(\al)=\lambda (g(\al)-1)g(u)=\lambda g'(\al)g'(\beta)g'(u),\\
\mult' (\al^2,u)&=&\mult(\al^2,u)-\mult' (\al,\beta,u)=\lambda \binom{g(\al)}{2}g(u) -\lambda (g(\al)-1)g(u)\\
&=&\lambda \binom{g(\al)-1}{2} g(u)=\la \binom{g'(\al)}{2} g'(u).
\end{eqnarray*}
Therefore $\mathcal{G'}$ satisfies (C2).  

Let $u,v\in V'$ so that $u,v$ belong to  different parts of $\mathcal{G'}$,  $u,\alpha$ belong to the same part of $\mathcal{G'}$, and $u\notin\{\alpha,\beta\}$. We have 
\begin{eqnarray*}
\mult'(\beta, u, v)&=&|Z\cap H(\al,\{u,v\})|\approx |H(\al,\{u,v\})|/g(\al)=\mult(\al,u,v)/g(\al)\\
&=&\lambda g(u)g(v)=\lambda g'(\beta)g'(u)g'(v),\\
\mult' (\al,u,v)&=&\mult(\al,u,v)-\mult'(\beta,u,v)=\lambda (g(\al)-1)g(u)g(v)= \lambda g'(\al)g'(u)g'(v).
\end{eqnarray*}
Finally, let $u,v\in V'$ so that $u,v$ belong to  the same part of $\mathcal{G'}$, and $u,\alpha$ belong to different parts of $\mathcal{G'}$, and $u\notin\{\alpha,\beta\}$. By an argument very similar to the one above, we have 
\begin{eqnarray*}
\mult'(u, v,\beta)&=&\lambda g'(u)g'(v)g'(\beta),\\
\mult' (u,v,\alpha)&=& \lambda g'(u)g'(v)g'(\al).
\end{eqnarray*}
Therefore $\mathcal{G'}$ satisfies (C3), and the proof is complete.  
\qed
\section{Final Remarks}\label{remarks}
We define $\scr K_{m_1,\ldots,m_n}^{3}$ similar to $\scr K_{m\times n}^{3}$ with the difference that in $\scr K_{m_1,\ldots,m_n}^{3}$ we allow different parts to have different sizes. It seems reasonable to conjecture that
\begin{conjecture} \label{type2suffconj}
$\lambda \scr K_{m_1,\ldots,m_n}^{3}$ is $(r_1,\ldots,r_k)$-factorable if and only if 
\begin{enumerate}
\item  [\textup {(i)}] $m_i=m_j:=m$ for $i,j\in [n]$,
\item  [\textup {(ii)}] $3\divides r_imn$ for $i\in [k]$, and
\item  [\textup {(iii)}] $\sum_{i=1}^{k} r_i= 3\lambda(n-1)\binom{m}{2}$.  
\end{enumerate}
\end{conjecture}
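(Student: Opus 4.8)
The plan is to separate the two implications. For \textbf{necessity}, suppose $\lambda \scr K_{m_1,\ldots,m_n}^{3}$ is $(r_1,\ldots,r_k)$-factorable; then it is $\big(\sum_i r_i\big)$-regular, so all vertices share a common degree. I would first record that a vertex in a part of size $a$ has degree $f(a)+S$, where $M:=\sum_t m_t$, $S:=\sum_t\binom{m_t}{2}$, and $f(x)=(x-1)(M-x)-\binom{x}{2}$. Two parts of sizes $a,b$ give equal degrees iff $f(a)=f(b)$, and since $f$ is a \emph{strictly concave} quadratic in $x$, the common degree is attained by at most two part sizes. If exactly two sizes $a\ne b$ occurred, say with $p$ parts of size $a$ and $q$ of size $b$, equating the degrees collapses to the single Diophantine constraint $a(3-2p)+b(3-2q)=3$, which I expect to have no solution once every part has size at least $2$; this yields (i). The degenerate cases with a part of size $1$ (for instance sizes $\{1,2\}$, which already give a $1$-regular hypergraph) are genuine exceptions, so a correct statement should assume $m\ge 2$. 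Conditions (ii)--(iii) are then routine: counting vertex--edge incidences (hinges) inside one factor $F_i$ gives $r_imn=3\,|E(F_i)|$, whence $3\divides r_imn$, and summing the factor degrees at a single vertex gives $\sum_i r_i=3\lambda(n-1)\binom{m}{2}$.

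For \textbf{sufficiency} I would reuse the amalgamation--detachment engine of Theorem~\ref{moregenres} almost verbatim. The key observation is that its inductive detachment step uses \emph{no} divisibility hypothesis: conditions (S1)--(S2) of Theorem~\ref{type2suff} enter only in the base case $\ell=n$, where one must produce an $(r_1m,\ldots,r_km)$-factorization of the fully amalgamated hypergraph $\lambda m\binom{m}{2}K_n^{*}$. Thus the ``if'' half of the conjecture reduces to a statement about the collapsed hypergraph: \emph{$c\,K_n^{*}$ is $(s_1,\ldots,s_k)$-factorable if and only if $3\divides s_in$ for each $i$ and $\sum_i s_i=3c(n-1)$}, applied with $c=\lambda m\binom{m}{2}$ and $s_i=r_im$ (here $3\divides s_in \iff 3\divides r_imn$, which is exactly (ii), and $\sum_i s_i=3c(n-1)$ is exactly (iii)). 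Granting this sub-result, the laminar splitting driven by Lemma~\ref{laminarlem} through the families $\scr A,\scr B$ detaches each amalgamated vertex back into $m$ vertices while preserving (C1)--(C4), and taking $\ell=mn$ recovers the factorization of $\lambda\scr K_{m\times n}^{3}$ precisely as in the proof of Theorem~\ref{type2suff}.

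The hard part will be exactly this factorization of $c\,K_n^{*}$ under the \emph{weaker} divisibility $3\divides s_in$. The present tool, Lemma~\ref{3rfaclemma}, builds each hypergraph factor by tripling a graph factor of $\lambda K_n$, so it only ever produces factors whose degrees are multiples of $3$; this is what forces the stronger hypotheses (S1)--(S2). To reach degrees $s_i$ with $3\nmid s_i$ (which can occur only when $3\divides n$), one needs genuinely new regular factors of $K_n^{*}$ that are not tripled graph factors---for example factors assembled from cyclically rotated triples of parts when $3\divides n$---together with a parity/packing argument replacing the role Theorem~\ref{bahrodsurvey1thmfac} plays here. I expect constructing and patching these non-divisible-by-$3$ factors of $K_n^{*}$ to be the technical core of the conjecture, while the detachment half and the necessity half are comparatively routine.
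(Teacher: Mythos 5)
Your necessity argument breaks at exactly the step you flagged as an expectation rather than a proof: the constraint $a(3-2p)+b(3-2q)=3$ \emph{does} have solutions with every part of size at least $2$. Take $p=1$, $q=2$, $a=5$, $b=2$, i.e.\ part sizes $(5,2,2)$: then $5(3-2)+2(3-4)=3$. And indeed $\lambda\scr K_{5,2,2}^{3}$ is regular: a vertex in the part of size $5$ has degree $\lambda\left(\binom{2}{2}+\binom{2}{2}+4\cdot 4\right)=18\lambda$, while a vertex in a part of size $2$ has degree $\lambda\left(\binom{5}{2}+\binom{2}{2}+1\cdot 7\right)=18\lambda$. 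More generally, one part of size $3+b(2q-3)$ together with $q\ge 2$ parts of size $b$ satisfies your Diophantine condition for every $b\ge 1$. Since a regular hypergraph is trivially factorable (take $k=1$ and the whole hypergraph as the single factor, or, if one insists on $k\ge 2$, take $\lambda=2$ and two copies of $\scr K_{5,2,2}^{3}$ as two $18$-factors), these are outright counterexamples to the ``only if'' direction: condition (i) is not necessary, and your proposed repair of assuming all parts have size at least $2$ does not help. So this half of your proposal cannot be completed as written; what your concavity computation actually shows, when pushed to the end, is that the conjecture's necessity claim is false. The same computation exposes a sign error in the paper's own necessity argument: collecting terms correctly in its chain of equivalences gives $(m_p-m_q)\bigl(m_p+m_q-3-2\sum_{i\ne p,q}m_i\bigr)=0$, with a \emph{minus} sign on the last term, and that second factor genuinely vanishes for $(5,2,2)$; the paper's factor with $+2\sum_{i\ne p,q}m_i$ is incorrect.

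On sufficiency, your structural observation is correct and matches the paper: in the proof of Theorem \ref{moregenres} the divisibility hypotheses enter only at the base case $\ell=n$ (through Lemma \ref{3rfaclemma}), while the detachment induction via Lemma \ref{laminarlem} uses none of them, so the ``if'' direction would follow from a factorization theorem for $\lambda m\binom{m}{2}K_{n}^{*}$ under the weaker condition $3\mid r_imn$. But you do not prove that sub-result, and your formulation of it as an equivalence is false as stated: for $c=1$, $n=3$, $s_1=\dots=s_6=1$, the conditions $3\mid s_in$ and $\sum_i s_i=3c(n-1)=6$ hold, yet $K_3^{*}$ has no $1$-factor, because every edge $\{u^2,v\}$ covers $u$ twice. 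Leaving that core unproved puts you in the same position as the paper itself, which (this being a conjecture there) proves only the necessity direction --- erroneously, as noted above --- and offers Theorem \ref{type2suff} under the stronger hypotheses (S1)--(S2) as partial progress.
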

We prove the necessity as follows. Since $\lambda \scr K_{m\times n}^{3}$ is factorable, it must be regular. Let $u$ and $v$ be two vertices from two different parts, say $p^{th}$ and $q^{th}$ parts respectively. Then we have the following sequence of equivalences:
\begin{align*}
d(u)&=d(v) &\iff \\
\sum\nolimits_{\scriptstyle 1 \leq i  \leq n \hfill \atop  \scriptstyle i \ne p \hfill}  \binom{m_i}{2}+(m_p-1)\sum\nolimits_{\scriptstyle 1 \leq i  \leq n \hfill \atop  \scriptstyle i \ne p \hfill}m_i&=\\
\sum\nolimits_{\scriptstyle 1 \leq i  \leq n \hfill \atop  \scriptstyle i \ne q \hfill}  \binom{m_i}{2}+(m_q-1)\sum\nolimits_{\scriptstyle 1 \leq i  \leq n \hfill \atop  \scriptstyle i \ne q \hfill}m_i &&\iff \\
\binom{m_q}{2}+\sum\nolimits_{\scriptstyle 1 \leq i  \leq n \hfill \atop  \scriptstyle i \ne p,q \hfill}  \binom{m_i}{2}+(m_p-1)(m_q+\sum\nolimits_{\scriptstyle 1 \leq i  \leq n \hfill \atop  \scriptstyle i \ne p,q \hfill}m_i)&= \\
\binom{m_p}{2}+\sum\nolimits_{\scriptstyle 1 \leq i  \leq n \hfill \atop  \scriptstyle i \ne p,q \hfill}  \binom{m_i}{2}+(m_q-1)(m_p+\sum\nolimits_{\scriptstyle 1 \leq i  \leq n \hfill \atop  \scriptstyle i \ne p,q \hfill}m_i) &&\iff& \\
\binom{m_p}{2}-\binom{m_q}{2}+m_pm_q-m_p-m_pm_q+m_q+(m_p-m_q)\sum\nolimits_{\scriptstyle 1 \leq i  \leq n \hfill \atop  \scriptstyle i \ne p,q \hfill}m_i)&=0 &\iff \\
m_p^2-m_q^2-3m_p+3m_q+2(m_p-m_q)\sum\nolimits_{\scriptstyle 1 \leq i  \leq n \hfill \atop  \scriptstyle i \ne p,q \hfill}m_i)&=0 &\iff \\
(m_p-m_q)(m_p+m_q-3+2\sum\nolimits_{\scriptstyle 1 \leq i  \leq n \hfill \atop  \scriptstyle i \ne p,q \hfill}m_i)&=0 & \iff \\
  m_p&=m_q:=m.
\end{align*}
This proves (i). The existence of an $r_i$-factor implies that $3\divides r_imn$ for $i\in [k]$. Since each $r_i$-factor is an $r_i$-regular spanning sub-hypergraph and $\lambda \scr K_{m\times n}^{3}$ is $3\lambda(n-1)\binom{m}{2}$-regular, we must have $\sum_{i=1}^{k} r_i=3\lambda(n-1)\binom{m}{2}$.

In Theorem \ref{type2suff}, we made partial progress toward settling Conjecture \ref{type2suffconj}, however at this point, it is not clear to us whether our approach will work for the remaining cases.

\section{Acknowledgement}
The author is deeply grateful to  Professors Chris Rodger,  Mateja \v Sajna, and the anonymous referee for their constructive comments.

\end{document}